\newtheorem{theorem}{Theorem}[section]
\newtheorem{proposition}[theorem]{Proposition}
\newtheorem{lemma}[theorem]{Lemma}
\theoremstyle{definition}
\newtheorem{remark}[theorem]{Remark}
\numberwithin{equation}{section}
\begin{document}

\baselineskip=15pt

\title[Vanishing theorem for co-Higgs bundles on the moduli space of bundles]{A 
vanishing theorem for co-Higgs bundles on the moduli space of bundles}

\author[I. Biswas]{Indranil Biswas}

\address{School of Mathematics, Tata Institute of Fundamental
Research, Homi Bhabha Road, Bombay 400005, India}

\email{indranil@math.tifr.res.in}

\author[S. Rayan]{Steven Rayan}

\address{Department of Mathematics \& Statistics, University of Saskatchewan, McLean 
Hall, 106 Wiggins Road, Saskatoon, SK, S7N 5E6, Canada}

\email{rayan@math.usask.ca}

\subjclass[2010]{14H60, 14D20, 14D21}

\keywords{Co-Higgs bundle, integrability, moduli space, Poincar\'e bundle}

\date{}

\begin{abstract}
We consider smooth moduli spaces of semistable vector bundles of fixed rank and 
determinant on a compact Riemann surface $X$ of genus at least $3$. The choice of a 
Poincar\'e bundle for such a moduli space $M$ induces an isomorphism between $X$ and 
a component of the moduli space of semistable sheaves over $M$. We prove that
$\dim H^0(M,\, \text{End}({\mathcal E})\otimes TM)\,=\, 1$ for
any vector bundle $\mathcal E$ on $M$ coming from this component. Furthermore, there are
no nonzero integrable co-Higgs fields on $\mathcal E$.
\end{abstract}

\maketitle

\section{Introduction}

Higgs bundles have been mostly studied on Riemann surfaces although moduli spaces 
have been constructed for arbitrary smooth projective varieties \cite{Si}. On 
Riemann surfaces, Higgs bundles moduli spaces are only nonempty in positive genus, 
where the canonical line bundle has sections. One way to extend the theory of Higgs 
bundles to genus $0$, without introducing a parabolic structure, is to consider 
\emph{co-Higgs bundles} as in \cite{R1}. In general, a co-Higgs bundle on a variety 
$X$ is a holomorphic bundle $E$ with a co-Higgs field $\phi\,:\,E\,
\longrightarrow\, E\otimes TX$ for 
which $\phi\bigwedge\phi$ vanishes in $H^0(X,\, \mbox{End}(E)\otimes\bigwedge^2TX)$. This 
vanishing is the analogue of the integrability condition in \cite{Si}. Co-Higgs 
bundles arose originally in generalized complex geometry \cite{H3}, as the limit of 
generalized holomorphic bundles as a generalized complex structure becomes ordinary 
complex.

The pattern continues in higher dimension, with Higgs bundles and co-Higgs bundles 
existing largely as general-type and Fano phenomena, respectively (see 
\cite{R2,Co,B1,B2,BB}). Families of integrable co-Higgs bundles on Fano surfaces, 
namely $\mathbb P^2$ and $\mathbb P^1\times\mathbb P^1$, have been constructed in 
\cite{R2,VC}. Another natural variety to consider is $M_\xi$, the moduli space of 
semistable bundles of rank $r$ and fixed determinant $\xi$ on a compact Riemann surface $X$ 
of genus $g$ with $g\, \geq\, 2$. This variety is known to be Fano and has 
$\mbox{Pic}(M_\xi)\,\cong\,\mathbb Z$. It is an irreducible normal projective variety,
and it is smooth whenever $\gcd(r,{\rm degree}(\xi))\,=\,1$.

One immediate example of an integrable co-Higgs bundle on $M_\xi$ is the one with 
$E\,=\,{\mathcal O}_{M_\xi}\oplus TM_\xi$ and the co-Higgs field $\phi$ that maps $TM_\xi$ 
to $\mathcal O_{M_\xi}\otimes TM_\xi$ via the identity and acts as zero on the
component ${\mathcal O}_{M_\xi}\, \subset\, E$. This
co-Higgs field is nilpotent of order $2$, and so in particular 
$\phi\bigwedge\phi\,=\,0$. This is the so-called canonical co-Higgs bundle, which can be 
defined on any variety \cite{R0,H3}. A sufficient condition for $(E,\,\phi)$ to be 
slope stable in the sense of Hitchin \cite{H1} is that $TM_\xi$ should be slope stable as 
a bundle \cite{Mi,R0}. This has been conjectural for some time. It has been known 
for $r\,=\,2$ since \cite{Hw} (see \cite{Iy} for $r\,>\,2$). It is natural to ask if
there are examples of 
integrable co-Higgs bundles on $M_\xi$ where $E$ is stable as a vector bundle, as 
the stability of $E$ makes the stability of $(E,\,\phi)$ automatic. Under
the assumption that $g\, \geq\, 3$, we show 
that if $(E,\,\phi)$ is integrable, and $E$ is a stable 
bundle from the component of the moduli space of sheaves on $M_\xi$ isomorphic to 
$X$, then $\phi\,=\,0$.

More precisely, we prove that the total space of the tangent bundle $TX$ is a 
component of the moduli space of semistable co-Higgs bundles on $M_\xi$, while the 
moduli space of integrable co-Higgs bundles sits inside $TX$ as the image of the 
zero section $X\, \longrightarrow\, TX$.

This work is inspired in part by a conversation between E. Witten and the second 
named author concerning branched covers of moduli spaces of bundles, with reference 
to spectral covers constructed using Higgs bundles with poles \cite{FW}. We observe 
that, according to Theorem 3.1, co-Higgs bundles $(E,\,\phi)$ with $E$ in the given 
component of the space of stable sheaves on $M_\xi$ do not generate interesting 
spectral covers --- they are just the zero section in the total space of $TM_\xi$. In
the canonical co-Higgs example, the spectral cover is the first-order 
neighborhood of the zero section of $TM_\xi$ and can potentially be perturbed
by deforming $(E,\,\phi)$ in such a way that $\phi$ is no longer 
nilpotent but still integrable. We leave these speculations as inspiration for 
future work.

\section{A class of co-Higgs bundles}

\subsection{Poincar\'e bundle and co-Higgs fields}

Let $X$ be a compact connected Riemann surface of genus $g$, with $g\, \geq\, 3$. The
holomorphic tangent and cotangent bundles of $X$ will be denoted by $TX$ and $K_X$
respectively.
Fix an integer $r\, \geq\, 2$ and also a holomorphic line bundle $\xi\, \longrightarrow\, X$
such that $\text{degree}(\xi)$ is coprime to $r$. Let $M_\xi$ denote the moduli space
of stable holomorphic vector bundles $E$ on $X$ such that $\text{rank}(E)\,=\, r$ and
$\bigwedge^r E\,=\, \xi$. This $M_\xi$ is an irreducible smooth projective variety of
dimension $(r^2-1)(g-1)$. The Picard group of $M_\xi$ is isomorphic to $\mathbb Z$. Hence
the notion of (semi)stability of vector bundles over $M_\xi$ does not depend on the choice
of polarization on $M_\xi$.

A Poincar\'e vector bundle on $X\times M_\xi$ is a holomorphic
vector bundle
\begin{equation}\label{e0}
{\mathcal E}\, \longrightarrow\, X\times M_\xi
\end{equation}
such that for every point $E\, \in\, M_\xi$, the restriction ${\mathcal E}\vert_{X\times
\{E\}}$ lies in the isomorphism class of vector bundles on $X$
corresponding to $E$. There are Poincar\'e vector
bundles on $X\times M_\xi$; two such vector bundles differ by tensoring with
a line bundle pulled back from $M_\xi$. Fix a Poincar\'e vector bundle ${\mathcal E}$
on $X\times M_\xi$ as in \eqref{e0}. For any point $x\, \in\, X$, the vector bundle
${\mathcal E}\vert_{\{x\}\times M_\xi}$ on $M_\xi$ will be denoted by ${\mathcal E}_x$.
It is known that ${\mathcal E}_x$ is stable \cite[p.~174, Proposition~2.1]{LN}. If
$x$ and $y$ are two distinct points of $X$, then ${\mathcal E}_x$ is not isomorphic
to ${\mathcal E}_y$ \cite[p.~174, Theorem]{LN}. On the other hand, the infinitesimal
deformation map
$$
T_xX\, \longrightarrow\, H^1(M_\xi,\, \text{End}({\mathcal E}_x))
$$
is an isomorphism \cite[p.~392, Theorem~2]{NR}. Therefore, $X$ is a connected component
of the moduli space of stable sheaves on $M_\xi$ with numerical type that of
${\mathcal E}_x$. It may be noted that all sheaves on $M_\xi$ lying in this component of
the moduli space are actually locally free.

Fix a point $x\, \in\, X$. We will construct a family of co-Higgs fields on 
${\mathcal E}_x$ parametrized by the tangent line $T_xX\, \subset\, TX$.

The fiber ${\mathcal O}_X(x)_x$ over $x$
of the line bundle ${\mathcal O}_X(x)$ is identified with $T_xX$
using the Poincar\'e adjunction formula. More precisely, for any holomorphic
coordinate $z$ on $X$ defined around $x$ with $z(x)\,=\, 0$, the evaluation at $x$ of
the section $z\frac{\partial}{\partial z}$ of $TX\otimes{\mathcal O}_X(-x)$ is
independent of the choice of the function $z$. The isomorphism between ${\mathcal
O}_X(x)$ and $T_xX$ is given by this element of $(TX\otimes{\mathcal O}_X(-x))_x\,=
\, T_xX\otimes{\mathcal O}_X(-x)_x$. Let
$$
p\, :\, X\times M_\xi\,\longrightarrow\, X\ \ \text{ and }\ \
q\, :\, X\times M_\xi\,\longrightarrow\, M_\xi 
$$
be the natural projections. Consider the following short exact sequence of
sheaves on $X\times M_\xi$:
\begin{equation}\label{e1}
0\,\longrightarrow\,{\rm End}({\mathcal E})\,\longrightarrow\,{\rm End}({\mathcal E})
\otimes p^*{\mathcal O}_X(x) \,\longrightarrow\,{\rm End}({\mathcal E})_x\otimes
{\mathcal O}_X(x)_x\,=\, {\rm End}({\mathcal E})_x\otimes T_xX\,\longrightarrow\, 0\, ,
\end{equation}
where ${\rm End}({\mathcal E})_x\,=\, {\rm End}({\mathcal E}_x)$ is supported on
$\{x\}\times M_\xi$, and
$T_xX$ (respectively, ${\mathcal O}_X(x)_x$) denotes the trivial line bundle
on $\{x\}\times M_\xi$ with fiber $T_xX$ (respectively, ${\mathcal O}_X(x)_x$). Let
\begin{equation}\label{e2}
R^0q_* ({\rm End}({\mathcal E})_x\otimes T_xX)\,=\, {\rm End}({\mathcal E})_x\otimes T_xX
\,\stackrel{\gamma'}{\longrightarrow}\, R^1q_*({\rm End}({\mathcal E}))
\end{equation}
be the homomorphism in the long exact sequence of direct images associated to \eqref{e1}
for the projection $q$.

We have ${\rm End}({\mathcal E})\,=\, {\rm ad}({\mathcal E})\oplus {\mathcal O}_{X\times 
M_\xi}$, where ${\rm ad}({\mathcal E})\, \subset\, {\rm End}({\mathcal E})$ is the 
subbundle of corank one defined by the sheaf of trace zero endomorphisms, and the 
homomorphism ${\mathcal O}_{X\times M_\xi}\, \hookrightarrow\, {\rm End}({\mathcal E})$ 
is given by the scalar multiplications of ${\mathcal E}$. Therefore, we have
\begin{equation}\label{projeq}
R^1q_*({\rm End}({\mathcal E}))\,=\, R^1q_*({\rm ad}({\mathcal E}))\oplus
R^1q_*{\mathcal O}_{X\times M_\xi}\, .
\end{equation}
On the other hand, we have
$$
R^1q_*({\rm ad}({\mathcal E}))\,=\, TM_\xi\, ,
$$
because the fiber of $TM_\xi$ over any vector bundle $V\, \in\, M_\xi$ is
$H^1(X,\, \text{ad}(V))$. Therefore, from \eqref{projeq} we get a surjective homomorphism
\begin{equation}\label{projeq2}
R^1q_*({\rm End}({\mathcal E}))\,\longrightarrow\, TM_\xi\, \longrightarrow\, 0\, .
\end{equation}
Let ${\rm End}({\mathcal E})_x\otimes T_xX
\,\stackrel{\gamma'}{\longrightarrow}\, R^1q_*({\rm End}({\mathcal E}))
,\longrightarrow\, TM_\xi$ be the
composition of the
homomorphism $\gamma'$ in \eqref{e2} with the homomorphism in \eqref{projeq2}.
This composition produces a homomorphism
\begin{equation}\label{e3}
\gamma\, :\, T_xX\, \longrightarrow\, H^0(M_\xi,\, {\rm End}({\mathcal E})^*_x
\otimes TM_\xi)\,=\, H^0(M_\xi,\, {\rm End}({\mathcal E}_x)\otimes TM_\xi)\, .
\end{equation}
In other words, $\gamma(v)$ is a co-Higgs field on ${\mathcal E}_x$ for all $v\,\in\,
T_xX$.

\subsection{Non-integrability}

\begin{proposition}\label{prop1}
\mbox{}
\begin{enumerate}
\item The homomorphism $\gamma$ in \eqref{e3} is injective.

\item For any nonzero vector $v\, \in\, T_xX$, the co-Higgs field $\gamma(v)$ is not
integrable.
\end{enumerate}
\end{proposition}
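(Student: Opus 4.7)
The plan is to prove both parts of Proposition~\ref{prop1} by a fiberwise analysis on $M_\xi$: for each $E\in M_\xi$ we identify $\gamma(v)|_E$ explicitly, reducing the argument to linear algebra in $\text{ad}(E_x)\cong\mathfrak{sl}_r$ at a sufficiently general point $E$. Restricting the sequence \eqref{e1} to $X\times\{E\}$ and using the decomposition $\text{End}(\mathcal{E})=\text{ad}(\mathcal{E})\oplus\mathcal{O}$, cohomology-and-base-change (valid because every fiber $\mathcal{E}|_{X\times\{E\}}$ is stable) identifies the trace-free part of $\gamma(v)|_E\colon\text{ad}(E_x)\to T_EM_\xi=H^1(X,\text{ad}(E))$ with the value at $v$ of the connecting homomorphism
\[
\delta_E\,\colon\,T_xX\otimes\text{ad}(E_x)\,\longrightarrow\,H^1(X,\text{ad}(E))
\]
of the short exact sequence $0\to\text{ad}(E)\to\text{ad}(E)(x)\to T_xX\otimes\text{ad}(E_x)\to 0$ on $X$. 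A \v{C}ech calculation with the cover $\{X\setminus\{x\},\,\text{a disk around }x\}$ represents $\delta_E(v\otimes A)$ by the cocycle $A/z$, where $z$ is a local coordinate at $x$ with $v=\partial_z|_x$.

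The key intermediate step is that, for a generic $E\in M_\xi$, the map $\delta_E(v\otimes\cdot)\colon\text{ad}(E_x)\to T_EM_\xi$ is injective. By Serre duality on $X$ this is equivalent to surjectivity of the transpose
\[
\varepsilon_E^v\,\colon\,H^0(X,\text{ad}(E)\otimes K_X)\,\longrightarrow\,\text{ad}(E_x),\qquad \omega\longmapsto\omega(x)(v),
\]
which in turn is equivalent to the vanishing $H^0(X,\text{ad}(E)(x))=0$. The condition is open in $E$ by semi-continuity, and non-emptiness of the locus follows from the surjectivity of the Hitchin map $T^*M_\xi\to\bigoplus_{i=2}^{r}H^0(X,K_X^i)$, which furnishes Higgs pairs $(E,\omega)$ whose local data at $x$ force $\varepsilon_E^v$ not to land in any proper subspace of $\text{ad}(E_x)$; the hypothesis $g\geq 3$ enters to provide enough sections of $\text{ad}(E)\otimes K_X$ for this count.

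Granting this step, Part~(1) is immediate: since $\dim T_xX=1$, injectivity of $\gamma$ amounts to $\gamma(v)\neq 0$ for any single $v\neq 0$, and at a generic $E$ the map $\gamma(v)|_E$ is injective, in particular nonzero. For Part~(2), pick any basis $\{B^k\}$ of $\text{ad}(E_x)$ at such a generic $E$ and set $w_k:=\delta_E(v\otimes B^k)\in T_EM_\xi$; by the key step the $w_k$ are linearly independent, so the bivectors $\{w_k\wedge w_l\}_{k<l}$ are linearly independent in $\bigwedge^2 T_EM_\xi$. Writing $\gamma(v)|_E=\sum_k B^k\otimes w_k$ (using the trace form on $\text{ad}(E_x)$ to identify it with its dual), a direct unpacking of the wedge of a co-Higgs field with itself gives
\[
(\gamma(v)\bigwedge\gamma(v))|_E\;=\;\sum_{k<l}[B^k,B^l]\otimes w_k\wedge w_l.
\]
Since $\text{ad}(E_x)=\mathfrak{sl}_r$ is non-abelian for $r\geq 2$, at least one bracket $[B^k,B^l]$ is nonzero, and linear independence of the $w_k\wedge w_l$ prevents cancellations; hence $(\gamma(v)\bigwedge\gamma(v))|_E\neq 0$, which proves $\gamma(v)\bigwedge\gamma(v)\neq 0$ globally and establishes the non-integrability claim.

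The main obstacle is the key step above, namely the existence of a stable $E\in M_\xi$ with $H^0(X,\text{ad}(E)(x))=0$. While the Hitchin-map argument captures the conceptual content, the most hands-on route is a dimension count on the jumping locus $\{E:h^0(\text{ad}(E)(x))>0\}$, combined with the generic global generation of $\text{ad}(E)\otimes K_X$ for stable $E$ when $g\geq 3$; one must also check that the resulting open locus of ``good'' $E$ is nonempty uniformly in $x\in X$.
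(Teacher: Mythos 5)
Your overall strategy is the same as the paper's: restrict $\gamma(v)$ to a single fiber over a point $E\in M_\xi$, identify its trace-free part with the connecting homomorphism of $0\to\text{ad}(E)\to\text{ad}(E)\otimes{\mathcal O}_X(x)\to \text{ad}(E)_x\otimes T_xX\to 0$, reduce everything to the injectivity of that connecting map at a generic $E$ (equivalently $H^0(X,\text{ad}(E)\otimes{\mathcal O}_X(x))=0$, i.e.\ $\dim H^0(X,\text{End}(E)\otimes{\mathcal O}_X(x))=1$), and then conclude non-integrability from the non-abelianness of $\mathfrak{sl}_r$ together with the linear independence of the resulting bivectors. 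Your coordinate computation $(\gamma(v)\wedge\gamma(v))|_E=\sum_{k<l}[B^k,B^l]\otimes w_k\wedge w_l$ is a legitimate unpacking of the paper's coordinate-free statement involving $\bigwedge^2 h'$ and the dual of the Lie bracket, and your deduction of Part (1) from the fiberwise injectivity is fine (the paper instead deduces (1) from (2), but both rest on the same input).

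The genuine gap is the ``key intermediate step'' itself, which you explicitly leave open: the existence of a stable $E\in M_\xi$ with $H^0(X,\text{ad}(E)\otimes{\mathcal O}_X(x))=0$. This is precisely Lemma 2.2 of the paper and is the technical heart of the whole proposition; it does not follow from a soft argument. Your Hitchin-map sketch is circular: surjectivity of the evaluation map $\varepsilon^v_E\colon H^0(X,\text{ad}(E)\otimes K_X)\to\text{ad}(E_x)$ is, via the long exact sequence of $0\to\text{ad}(E)\otimes K_X(-x)\to\text{ad}(E)\otimes K_X\to\text{ad}(E_x)\otimes K_{X,x}\to 0$ and Serre duality, \emph{equivalent} to $H^0(X,\text{ad}(E)(x))=0$, so producing ``enough sections'' of $\text{ad}(E)\otimes K_X$ (a dimension count gives $(r^2-1)(g-1)\ge r^2-1$ of them, but says nothing about their values at $x$) does not establish surjectivity of evaluation at $x$; likewise ``generic global generation of $\text{ad}(E)\otimes K_X$'' is a restatement of the claim, not a proof. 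The paper closes this gap by an explicit construction: it builds a rank-$r$ bundle $V_r$ as an iterated extension of carefully chosen line bundles $L_1,\dots,L_r$ with $H^0(X,\text{Hom}(L_i,L_j)\otimes{\mathcal O}_X(x))=0$ for $i\ne j$ (this is where $g\ge 3$ enters) and with nonzero successive extension classes, shows by a filtration argument that every $T\in H^0(X,\text{End}(V_r)\otimes{\mathcal O}_X(x))$ is a scalar, and then uses openness (semi-continuity), the irreducibility of the moduli stack, and tensoring by a line bundle to land in $M_\xi$. Some such construction, or a genuine dimension count on the jumping locus, is required before your argument is complete.
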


\begin{proof}
The dimension of $T_xX$ in \eqref{e3} is one. Hence if the homomorphism $\gamma$ from 
$T_xX$ is not injective, then we have $\gamma\,=\, 0$. Therefore, the first statement in 
the proposition follows from the second statement. We will prove the second statement.

Lemma \ref{lem1} says that there are vector bundles $E\, \in \, M_\xi$ on $X$
such that
\begin{equation}\label{e4}
\dim H^0(X,\, \text{End}(E)\otimes{\mathcal O}_X(x))\,=\, 1\, .
\end{equation}
Fix such a vector bundle $E\, \in \, M_\xi$. Consider the natural short exact sequence
\begin{equation}\label{ne}
0\,\longrightarrow\,{\rm End}(E)\,\longrightarrow\,{\rm End}(E)
\otimes {\mathcal O}_X(x) \,\longrightarrow\,{\rm End}(E)_x\otimes
{\mathcal O}_X(x)_x\,=\, {\rm End}(E)_x\otimes T_xX\,\longrightarrow\, 0\, ;
\end{equation}
note that it coincides with the restriction of the exact sequence in \eqref{e1} to
$X\times\{E\}\, \subset\, X\times M_\xi$. Let
\begin{equation}\label{e5}
\begin{matrix}
0 & \longrightarrow & H^0(X,\, \text{End}(E)) & \stackrel{b}{\longrightarrow} &
H^0(X,\, \text{End}(E)\otimes{\mathcal O}_X(x)) \\
& \stackrel{s}{\longrightarrow} & {\rm End}(E)_x\otimes T_xX &
\stackrel{h}{\longrightarrow} &H^1(X,\, \text{End}(E)) & = & T_E M_\xi\oplus
H^1(X,\, {\mathcal O}_X)
\end{matrix}
\end{equation}
be the exact sequence of cohomologies associated to \eqref{e4}. Since $E$ is stable,
we have $H^0(X,\, \text{End}(E))\,=\,{\mathbb C}\cdot \text{Id}_E$. Therefore, from
\eqref{e4} it follows that the homomorphism $b$ in \eqref{e5} is an isomorphism.
Hence $s\,=\, 0$, which implies that $h$ in \eqref{e5} is injective.

The exact sequence in \eqref{ne} is a direct sum of the following two short exact
sequences
$$
0\,\longrightarrow\, {\mathcal O}_X\,\longrightarrow\,
{\mathcal O}_X(x) \,\longrightarrow\,
{\mathcal O}_X(x)_x\,=\, T_xX\,\longrightarrow\, 0
$$
and
$$
0\,\longrightarrow\,{\rm ad}(E)\,\longrightarrow\,{\rm ad}(E)
\otimes {\mathcal O}_X(x) \,\longrightarrow\,{\rm ad}(E)_x\otimes
{\mathcal O}_X(x)_x\,=\, {\rm ad}(E)_x\otimes T_xX\,\longrightarrow\, 0\, ,
$$
where $\text{ad}(E)\, \subset\,{\rm End}(E)$ is the
subbundle of co-rank one defined by the sheaf of trace-zero endomorphisms. Let
$$
h'\, :\, 
{\rm ad}(E)_x\otimes T_xX \,\longrightarrow, H^1(X,\, \text{ad}(E))\, =\,
T_E M_\xi
$$
be the homomorphism in the long exact sequence of cohomologies associated to the
second short exact sequence. Since $h$ \eqref{e5} is injective, we conclude that
$h'$ is also injective.
Therefore, the homomorphism of second exterior products induced by $h'$
$$
\bigwedge\nolimits^2 h' \, :\, \bigwedge\nolimits^2{\rm ad}(E)_x\otimes (T_xX)^{\otimes 2}\,=\,
\bigwedge\nolimits^2({\rm ad}(E)_x\otimes T_xX)\, \longrightarrow\,
\bigwedge\nolimits^2 H^1(X,\, \text{ad}(E))\,=\, \bigwedge\nolimits^2 T_E M_\xi
$$
is also injective.

Consider the Lie bracket homomorphism $\bigwedge\nolimits^2 {\rm End}(E_x)\, \longrightarrow\,
{\rm End}(E_x)$, defined by $A\bigwedge B \, \longmapsto\, \frac{1}{2}(AB- BA)$. Let
$$
\eta\, :\, {\rm End}(E_x) \,=\, {\rm End}(E_x)^* \, \longrightarrow\,
(\bigwedge\nolimits^2 {\rm End}(E_x))^*\,=\, \bigwedge\nolimits^2 {\rm End}(E_x)
$$
be the dual of this Lie bracket homomorphism. Let
$$
\eta'\, :\, {\rm End}(E_x) \, \longrightarrow\, \bigwedge\nolimits^2 {\rm ad}(E_x)
$$
be the composition of $\eta$ with the projection
$\bigwedge\nolimits^2 {\rm End}(E_x) \, \longrightarrow\, \bigwedge\nolimits^2 {\rm ad}(E_x)$
induced by the natural projection ${\rm End}(E_x) \, \longrightarrow\,{\rm ad}(E_x)$.

For any $v\, \in\, T_xX$, the element
$$
(\gamma(v)\bigwedge \gamma(v))(E)\, \in\, \text{End}(E_x)\otimes (\bigwedge\nolimits^2 T_E M_\xi)
$$
$$
=\, \text{End}(E_x)^*\otimes (\bigwedge\nolimits^2 T_E M_\xi)
\,=\,
\text{Hom}(\text{End}(E_x),\, \bigwedge\nolimits^2 T_E M_\xi)
$$
coincides with the homomorphism $\text{End}(E_x)\, \longrightarrow\,
\bigwedge\nolimits^2 T_E M_\xi$ defined by $$w\, \longmapsto\, (\bigwedge\nolimits^2 h')(\eta'(w)
\otimes v^{\otimes 2})\, ,\ \ w\, \in\, \text{End}(E_x)\, .$$ Now from the
injectivity of $\bigwedge\nolimits^2 h'$ it follows immediately that
$$
(\gamma(v)\bigwedge \gamma(v))(E)\,\not=\, 0
$$
if $v\, \not=\, 0$. Therefore, the co-Higgs field $\gamma(v)$ is not integrable
for all $v\, \not=\, 0$.
\end{proof}

\begin{lemma}\label{lem1}
Fix a point $x\,\in\, X$. There is a nonempty Zariski open subset $U_x\, \subset\,
M_\xi$ such that for all $E\, \in\, U_x$,
$$
\dim H^0(X,\, {\rm End}(E)\otimes{\mathcal O}_X(x))\,=\, 1\, .
$$
\end{lemma}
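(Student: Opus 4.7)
The plan is to use the trace splitting to reduce the claim to a Brill--Noether-type vanishing for $\text{ad}(E)$, then invoke upper semicontinuity and produce a single example realizing that vanishing.

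First I would apply the trace splitting $\text{End}(E) = \text{ad}(E) \oplus \mathcal{O}_X$, which yields $\text{End}(E) \otimes \mathcal{O}_X(x) = \text{ad}(E)(x) \oplus \mathcal{O}_X(x)$. Since $g \geq 1$, $h^0(\mathcal{O}_X(x)) = 1$ (generated by the constant section), so the asserted equality $h^0(\text{End}(E)(x)) = 1$ is equivalent to the vanishing $H^0(X, \text{ad}(E)(x)) = 0$. A Riemann--Roch computation gives $\chi(\text{ad}(E)(x)) = (r^2-1)(2-g) \leq -(r^2-1) < 0$ for $g \geq 3$, so $0$ is the expected value. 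Since $h^0$ is upper semicontinuous on the irreducible variety $M_\xi$, it suffices to exhibit a single $E_0 \in M_\xi$ with $H^0(X, \text{ad}(E_0)(x)) = 0$.

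By Serre duality (using the self-duality $\text{ad}(E_0) \cong \text{ad}(E_0)^*$ coming from the trace form), this vanishing is equivalent to the surjectivity of the evaluation map
\[
\mathrm{ev}_x \colon H^0(X, \text{ad}(E_0) \otimes K_X) \longrightarrow \text{ad}(E_0)_x \otimes K_{X,x}.
\]
The source, identifiable with $T^*_{E_0} M_\xi$, has dimension $(r^2-1)(g-1) \geq 2(r^2-1)$ when $g \geq 3$, while the target has dimension $r^2-1$. The dimensional excess strongly suggests generic surjectivity.

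To exhibit a concrete $E_0$, for $r=2$ I would take a sufficiently general stable extension $0 \to L_1 \to E_0 \to L_2 \to 0$ with $L_1 \otimes L_2 = \xi$ and $\deg L_2 - \deg L_1$ large. The induced three-step filtration on $\text{ad}(E_0) \cong \text{Sym}^2 E_0 \otimes \xi^{-1}$ has successive quotients $L_1 L_2^{-1}$, $\mathcal{O}_X$, $L_1^{-1} L_2$. Twisting by $\mathcal{O}_X(x)$ and chasing the resulting long exact sequences, the contribution from the first quotient vanishes because $L_1 L_2^{-1}(x)$ has negative degree, and the contributions from the middle and top are killed by the connecting homomorphisms, provided the extension class is generic. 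For $r \geq 3$ one replaces the two-step extension by a general full flag of line bundles of strictly decreasing degree.

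The main obstacle is this last step: verifying that the connecting homomorphisms in the filtration are injective on the relevant global sections, which ultimately comes down to genericity of the extension/flag data. A purely global alternative is to identify the locus where $\mathrm{ev}_x$ fails to be surjective as the degeneracy locus of a morphism of vector bundles on $M_\xi$ (coming from $q_*(\text{ad}(\mathcal{E}) \otimes p^* K_X) \to \text{ad}(\mathcal{E}_x) \otimes K_{X,x}$), and then show by a transversality or dimension count that this locus is a proper closed subscheme.
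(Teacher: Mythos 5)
Your reduction is sound and matches the paper's overall skeleton: both arguments rest on semicontinuity of $h^0$ plus irreducibility, so that everything comes down to exhibiting a single bundle with $\dim H^0(X,\,\mathrm{End}(E)\otimes{\mathcal O}_X(x))=1$; your reformulation via the trace splitting and Serre duality (surjectivity of $\mathrm{ev}_x$ on $H^0(\mathrm{ad}(E)\otimes K_X)$) is correct. But the heart of the lemma is precisely that single example, and you have not produced it --- you say so yourself when you flag the injectivity of the connecting homomorphisms as ``the main obstacle.'' Worse, your specific proposal points in an unpromising direction: taking $\deg L_2-\deg L_1$ large makes $H^0(X,\,L_1^{-1}L_2(x))$ large, so you would need cup product with a single extension class to be injective on a section space of dimension roughly $\deg L_2-\deg L_1+2-g$, with part of the target ($H^1(X,{\mathcal O}_X(x))$, of dimension $g-1$) too small to absorb it; this forces a delicate analysis of the second component of the connecting map that is nothing like a routine genericity statement. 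The heuristic ``dimensional excess strongly suggests generic surjectivity'' of $\mathrm{ev}_x$ is likewise not a proof.

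The paper's construction is designed exactly to sidestep this. It builds a full flag $0=V_0\subset V_1\subset\cdots\subset V_r$ with line-bundle quotients $L_i$ whose degrees are all $0$ or $1$, chosen generically so that $H^0(X,\,\mathrm{Hom}(L_i,L_j)\otimes{\mathcal O}_X(x))=0$ for \emph{all} $i\neq j$ (in both directions); this is possible because the twisted $\mathrm{Hom}$'s have degree at most $2$ and $g\geq 3$, which is where the genus hypothesis enters. Then every $T\in H^0(\mathrm{End}(V_r)\otimes{\mathcal O}_X(x))$ automatically preserves the flag with no off-diagonal contributions surviving at the level of $H^0$ --- no connecting-map injectivity on large section spaces is ever needed --- and acts by a scalar $\lambda_i$ on each $L_i$; the non-splitting of each extension step forces $\lambda_1=\cdots=\lambda_r$. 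A further convenience you miss: the paper does not require the example to be stable. It works in the irreducible moduli \emph{stack}, where the vanishing locus and the stable locus are both nonempty and open and hence meet; insisting, as you do, on a ``sufficiently general stable extension'' adds the separate (and, for large degree gap, problematic) burden of proving stability of the same bundle. As written, your argument establishes the reduction but not the lemma.
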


\begin{proof}
Write
\begin{equation}\label{m0}
{\rm degree}(\xi)\,=\, d+rm_0\, ,
\end{equation}
where $d$ and $m_0$ are integers
with $1\, \leq\, d\, <\, r$. We will first construct a vector bundle $V_r$ on $X$
of rank $r$ and degree $d_0$ such that 
\begin{equation}\label{Vr}
\dim H^0(X,\, \text{End}(V_r)\otimes{\mathcal O}_X(x))\,=\, 1\, .
\end{equation}
The locus in $\text{Pic}^{g-1}(X)$ of line bundles $L$
with $H^0(X,\, L)\, \not=\, 0$ is the theta divisor. Therefore,
for a general line bundle $L$ on $X$ with $\text{degree}(L)\,=\, g-1$,
we have $H^0(X,\, L)\, =\, 0$. Given that $g\, \geq\, 3$, this implies that for a general
line bundle $L$ in $\text{Pic}^{1}(X)$ or $\text{Pic}^{2}(X)$,
we have $H^0(X,\, L)\, =\, 0$. Consequently, there are holomorphic line bundles
$$
\{L_1\, , \cdots\, , L_d\, , L_{d+1}\, , \cdots\, , L_r\}
$$
on $X$ such that
\begin{enumerate}
\item $\text{degree}(L_i)\, =\, 1$ for all $1\, \leq\, i \, \leq\, d$,

\item $\text{degree}(L_i)\, =\, 0$ for all $d+1\, \leq\, i \, \leq\,r$

\item $H^0(X, \text{Hom}(L_i,\, L_j)\otimes{\mathcal O}_X(x))\,=\, 0$ for all $(i,\, j)
\,\in\, \{1\, , \cdots\, ,r\}\times \{1\, , \cdots\, ,r\}$ with $i\, \not=\, j$.
\end{enumerate}
We will now inductively construct holomorphic vector bundles $V_i$ of rank $i$,
$0\,\leq\, i\, \leq\, r$.

Set $V_0\,=\, 0$ and $V_1\, =\, L_1$. For each $2\,\leq\, i\, \leq\, r$, the vector
bundle $V_i$ fits in a short exact sequence of holomorphic vector bundles
\begin{equation}\label{e7}
0\, \longrightarrow\, V_{i-1} \, \longrightarrow\, V_i\, \longrightarrow\, L_i
\, \longrightarrow\, 0
\end{equation}
such that the corresponding extension class
$$
\mu_i\, \in\, H^1(X,\, \text{Hom}(L_i,\, V_{i-1}))
$$
satisfies the following condition: consider the exact sequence
$$
0\, \longrightarrow\, \text{Hom}(L_i,\, V_{i-2}) \, \longrightarrow\, \text{Hom}(L_i,\,
V_{i-1})\, \longrightarrow\, \text{Hom}(L_i,\, L_{i-1}) \, \longrightarrow\, 0\, ;
$$
it produces a surjective homomorphism
$$
h_i\, :\, H^1(X,\, \text{Hom}(L_i,\, V_{i-1}))\,\longrightarrow\,
H^1(X,\, \text{Hom}(L_i,\, L_{i-1})) \,\longrightarrow\, 0\, .
$$
The condition that $\mu_i$ is required to satisfy states as
\begin{equation}\label{mu}
h_i(\mu_i)\, \not=\, 0\, ;
\end{equation}
note that $H^1(X,\, \text{Hom}(L_i,\, L_{i-1}))\, \not=\, 0$ because by
Riemann Roch,
$$
\chi(\text{Hom}(L_i,\, L_{i-1}))\,=\, \text{degree}(\text{Hom}(L_i,\, L_{i-1}))
-g+1 \, \leq \, 1-g+1 \, <\, 0\, .
$$

It should be clarified that the above conditions do not determine $V_i$ uniquely.
We take $\{V_i\}_{i=0}^r$ to be a collection of vector bundles satisfying the
above conditions.

We will prove that \eqref{Vr} holds.

To prove \eqref{Vr}, consider the filtration
\begin{equation}\label{e6}
0\,=\, V_0\, \subset\, V_1\, \subset\, \cdots \, \subset\, V_{r-1}\, \subset\, V_r
\end{equation}
of $V_r$ by holomorphic subbundles obtained from \eqref{e7}. Take any
$$
T\, \in\, H^0(X,\, \text{End}(V_r)\otimes{\mathcal O}_X(x))\,.
$$
We will first prove that $T$ preserves the filtration in \eqref{e6}, meaning
\begin{equation}\label{T}
T(V_i)\, \subset\, V_i\otimes{\mathcal O}_X(x) 
\end{equation}
for all $i$.

Fix any $i\, \in\, \{1\, , \cdots \, ,r\}$, and consider the short exact sequence
\begin{equation}\label{f1}
\begin{matrix}
0 & \longrightarrow & \text{Hom}(V_{k+1}/V_k,\, L_j)\otimes{\mathcal O}_X(x)
 & \longrightarrow & \text{Hom}(V_{k+1},\, L_j)\otimes{\mathcal O}_X(x)\\
& \longrightarrow & \text{Hom}(V_k,\, L_j)\otimes{\mathcal O}_X(x)
&\longrightarrow 0
\end{matrix}
\end{equation}
obtained from \eqref{e7} by tensoring with
${\mathcal O}_X(x)$, where $j\, >\, i$ and $k\, <\, i$. Since
$$
H^0(X,\, \text{Hom}(V_{k+1}/V_k,\, L_j)\otimes{\mathcal O}_X(x))\,=\,
H^0(X,\, \text{Hom}(L_{k+1},\, L_j)\otimes{\mathcal O}_X(x))\,=\, 0
$$
(see the third condition on $\{L_i\}_{i=1}^r$), from the long exact sequence of
cohomologies associated to \eqref{f1} we conclude that
$$
H^0(X,\, \text{Hom}(V_{k+1},\, L_j)\otimes{\mathcal O}_X(x)) \,=\, 0
$$
if $H^0(X,\, \text{Hom}(V_{k},\, L_j)\otimes{\mathcal O}_X(x)) \,=\, 0$. Hence
using induction on $k$ it follows that
\begin{equation}\label{f2}
H^0(X,\, \text{Hom}(V_i,\, V_j/V_{j-1})\otimes{\mathcal O}_X(x))\,=\, 
H^0(X,\, \text{Hom}(V_i,\, L_j)\otimes{\mathcal O}_X(x)) \,=\, 0
\end{equation}
if $j\, >\, i$. Now from the exact sequence obtained from \eqref{e7} by tensoring with
${\mathcal O}_X(x)$
$$
\begin{matrix}
0 &\longrightarrow & \text{Hom}(V_i,\, V_j/V_{j-1}) \otimes{\mathcal O}_X(x)
&\longrightarrow & \text{Hom}(V_i,\, V_r/V_{j-1})\otimes{\mathcal O}_X(x)\\
& \longrightarrow & \text{Hom}(V_i,\, V_r/V_j)\otimes{\mathcal O}_X(x) 
&\longrightarrow & 0\, ,
\end{matrix}
$$
where $j \, >\, i$, it follows that
$$
H^0(X,\, \text{Hom}(V_i,\, V_r/V_{j-1})\otimes{\mathcal O}_X(x))\,=\, 0
$$
if $H^0(X,\, \text{Hom}(V_i,\, V_r/V_{j})\otimes{\mathcal O}_X(x))\,=\, 0$, because
\eqref{f2} holds. Therefore, using induction on $j$ it follows that
$$
H^0(X,\, \text{Hom}(V_i,\, V_r/V_i)\otimes{\mathcal O}_X(x))\,=\, 0\, ,
$$
which means that \eqref{T} holds.

Since \eqref{T} holds, the homomorphism $T$ induces a homomorphism
$$
T_i\, :\, L_i\, :=\,V_i/V_{i-1} \,\longrightarrow\, V_i/V_{i-1}
\otimes{\mathcal O}_X(x)\,=\, L_i\otimes{\mathcal O}_X(x)
$$
for each $i$. Now, $H^0(X, \, {\mathcal O}_X(x))\,=\,\mathbb C$ (for this it
is enough that $g\, \geq\, 1$), and hence it follows that
\begin{equation}\label{f3}
T_i\,=\, \lambda_i\cdot \text{Id}_{L_i}\, ,
\end{equation}
where $\lambda_i\, \in\,\mathbb C$.

As $H^0(X, \, \text{Hom}(V_i/V_{i-1},\, V_j/V_{j-1})\otimes{\mathcal O}_X(x))
\,=\, H^0(X, \, \text{Hom}(L_i,\, L_j)\otimes{\mathcal O}_X(x))\,=\, 0$
if $j\, <\, i$, it follows that there is no nonzero homomorphism
$S\, :\, V_r\, \longrightarrow\, V_r\otimes{\mathcal O}_X(x)$ over $X$ which
is nilpotent with respect to the filtration in \eqref{e6}, meaning
$S(V_i)\,\subset\, V_{i-1}\otimes{\mathcal O}_X(x)$ for all $i\, \geq\, 1$.

If $\lambda_1\,=\, \cdots \,=\, \lambda_r$ (constructed in \eqref{f3}), then 
$T-\lambda_1\cdot \text{Id}_{V_r}$ is nilpotent with respect to the filtration in 
\eqref{e6}. From the above observation that there are no nonzero nilpotent homomorphisms
it would then follow that $T\,=\, \lambda_1\cdot \text{Id}_{V_r}$. Therefore, to
prove \eqref{Vr} it suffices to show that
\begin{equation}\label{s2}
\lambda_1\,=\, \cdots \,=\, \lambda_r\, .
\end{equation}

Assume that \eqref{s2} fails. Let $k$ be the smallest integer such that
$\lambda_k \, \not=\, \lambda_1$. Now consider the restriction
$$
T'\, :=\, T\vert_{V_k}\, : \, V_k\, \longrightarrow\, V_k\otimes{\mathcal O}_X(x)\, .
$$
Let $L\, \subset\, V_k$ be the line subbundle generated by
$\text{kernel}(T'-\lambda_k\cdot\text{Id}_{V_k})$. The restriction to $L$ of
the projection $V_k\, \longrightarrow\, V_k/V_{k-1}\,=\, L_k$ is an isomorphism.
So $L$ provides a splitting of the short exact sequence obtained by setting
$i\,=\, k$ in \eqref{e7}. But this contradicts the assumption that the extension 
class $\mu_k$ is nonzero (see \eqref{mu}). Therefore, we conclude that \eqref{s2}
holds. As noted before, this proves \eqref{Vr}.

Let $\mathcal M$ be the moduli stack of vector bundles $W$ on $X$ such that
$\text{rank}(W)\,=\,r$ and $\text{degree}(W)\,=\, d$ (see \eqref{m0}). Let
${\mathcal C}\, \subset\, \mathcal M$ be the locus of all $W$ such that
$$\dim H^0(X,\, \text{End}(W)\otimes{\mathcal O}_X(x))\,=\,1\, .$$ This ${\mathcal C}$ is
open by semi-continuity and it is nonempty because $V_n\,\in\, {\mathcal C}$. Let
${\mathcal S}\, \subset\, \mathcal M$ be the stable locus which is also
nonempty and open \cite[p.~635, Theorem~2.8(B)]{Ma}. Finally, ${\mathcal C}$ and
${\mathcal S}$ intersect because $\mathcal M$ is irreducible \cite[p.~396,
Proposition~3.4]{BL}, \cite[p.~394, Proposition~2.6(e)]{BL} (see also \cite{DS}).

Let $F$ be a stable vector bundle on $X$ of rank $r$ and degree $d$
such that $$\dim H^0(X,\, \text{End}(F)\otimes{\mathcal O}_X(x))\,=\,1\, .$$ There is a
holomorphic line bundle $L$ of degree $m_0$ (see \eqref{m0}) such that
$F\otimes L\, \in\, M_\xi$. Since $\text{End}(F)\,=\, \text{End}(F\otimes L)$,
semi-continuity ensures the existence of $U_x$ in the statement of the lemma.
\end{proof}

\section{Computation of co-Higgs fields}

As before, take any point $x\, \in\, X$.

\begin{theorem}\label{thm1}
The homomorphism $\gamma$ in \eqref{e3} is an isomorphism.
\end{theorem}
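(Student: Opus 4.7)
Proposition~\ref{prop1} gives injectivity of $\gamma$, and since $\dim T_xX=1$, proving the theorem is equivalent to the upper bound $\dim H^0(M_\xi,\mathrm{End}(\mathcal{E}_x)\otimes TM_\xi)\leq 1$.

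The first step I would take is to split off the trace: the decomposition $\mathrm{End}(\mathcal{E}_x)=\mathrm{ad}(\mathcal{E}_x)\oplus\mathcal{O}_{M_\xi}$ gives
\begin{equation*}
H^0(M_\xi,\mathrm{End}(\mathcal{E}_x)\otimes TM_\xi)\,=\,H^0(M_\xi,\mathrm{ad}(\mathcal{E}_x)\otimes TM_\xi)\,\oplus\,H^0(M_\xi,TM_\xi),
\end{equation*}
and the second summand vanishes because $\mathrm{Aut}(M_\xi)$ is finite for $g\geq 3$ (Kouvidakis--Pantev). The problem thereby reduces to bounding the trace-free summand by one.

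For the trace-free piece, the plan is to reinterpret it as a cohomology group on $X\times M_\xi$ via Leray. Since each $V\in M_\xi$ is stable we have $q_*\mathrm{ad}(\mathcal{E})=0$; combined with the identification $R^1q_*\mathrm{ad}(\mathcal{E})=TM_\xi$ used in~\eqref{projeq2} and the projection formula, the Leray five-term exact sequence yields
\begin{equation*}
H^0(M_\xi,\mathrm{ad}(\mathcal{E}_x)\otimes TM_\xi)\,\cong\,H^1(X\times M_\xi,\,q^*\mathrm{ad}(\mathcal{E}_x)\otimes\mathrm{ad}(\mathcal{E})).
\end{equation*}
I would then compute the right-hand side by running the Leray spectral sequence in the other direction along $p\colon X\times M_\xi\to X$: the fiber of $p_*$ at $y$ is $H^0(M_\xi,\mathrm{ad}(\mathcal{E}_x)\otimes\mathrm{ad}(\mathcal{E}_y))$, which is generically zero (by polystability of $\mathcal{E}_x\otimes\mathcal{E}_y$ via Narasimhan--Seshadri) and one-dimensional at $y=x$, producing a length-one skyscraper contribution. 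The Narasimhan--Ramanan isomorphism $T_xX\cong H^1(M_\xi,\mathrm{End}(\mathcal{E}_x))$ should then control the contribution coming from $R^1p_*$.

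The hardest step is expected to be this final $p_*/R^1p_*$ analysis: ruling out additional contributions from special points $y\neq x$ where $\mathcal{E}_x\otimes\mathcal{E}_y$ might fail to be simple, and controlling higher differentials in the Leray spectral sequence on $X\times M_\xi$. This may require a detailed appeal to Lemma~\ref{lem1} or a rigidity argument exploiting the fact that bundles in the component isomorphic to $X$ are cohomologically highly restrictive.
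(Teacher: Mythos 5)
Your reduction steps are sound and partly parallel the paper: injectivity comes from Proposition~\ref{prop1}, the splitting $\mathrm{End}({\mathcal E}_x)\,=\,\mathrm{ad}({\mathcal E}_x)\oplus{\mathcal O}_{M_\xi}$ together with $H^0(M_\xi,TM_\xi)\,=\,0$ (the paper cites \cite{NR} and \cite{H2} for this) isolates the trace-free piece, and your first Leray computation along $q$ is correct: since each $E\in M_\xi$ is simple, $q_*\mathrm{ad}({\mathcal E})\,=\,0$, so the five-term sequence does give $H^0(M_\xi,\mathrm{ad}({\mathcal E}_x)\otimes TM_\xi)\,\cong\,H^1(X\times M_\xi,\,q^*\mathrm{ad}({\mathcal E}_x)\otimes\mathrm{ad}({\mathcal E}))$.

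However, the second half --- the Leray analysis along $p$ --- is where the entire difficulty of the theorem lives, and the tools you name do not suffice to carry it out. Two concrete problems. First, for $R^0p_*$ you need $H^0(M_\xi,\mathrm{ad}({\mathcal E}_x)\otimes\mathrm{ad}({\mathcal E}_y))\,=\,\mathrm{Hom}(\mathrm{ad}({\mathcal E}_x),\mathrm{ad}({\mathcal E}_y))$ (via the trace form) to vanish for $y\neq x$ and to be exactly one-dimensional at $y=x$. Polystability of tensor products (which on $M_\xi$ is Donaldson--Uhlenbeck--Yau, not Narasimhan--Seshadri) only tells you the Hom space is governed by the stable decompositions of $\mathrm{ad}({\mathcal E}_x)$ and $\mathrm{ad}({\mathcal E}_y)$; it gives vanishing only if you know these adjoint bundles are stable, or at least that their stable summands for distinct $x,y$ never coincide, and it gives the one-dimensionality at $y=x$ only if $\mathrm{ad}({\mathcal E}_x)$ is simple. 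None of this is established by \cite{LN}, which gives stability and pairwise non-isomorphy of the ${\mathcal E}_x$ themselves, not of their adjoint bundles. Second, and more seriously, the five-term sequence for $p$ forces you to compute $H^0(X,R^1p_*{\mathcal F})$, whose fibers are $H^1(M_\xi,\mathrm{ad}({\mathcal E}_x)\otimes\mathrm{ad}({\mathcal E}_y))$; the Narasimhan--Ramanan theorem computes $H^1(M_\xi,\mathrm{End}({\mathcal E}_x))$, which is a different group, and no result you invoke controls these $H^1$'s. The paper avoids all of this by an entirely different mechanism: it passes to ${\mathbb P}({\mathcal E}_x)$, identifies $H^0(M_\xi,\mathrm{ad}({\mathcal E}_x)\otimes TM_\xi)$ with $H^0({\mathbb P},T_\phi\otimes\phi^*TM_\xi)$, and then uses the Hecke correspondence $\psi\colon{\mathcal U}\to{\mathcal N}$ to the moduli space with determinant $\xi\otimes{\mathcal O}_X(-x)$, together with the identities $T_\phi\,=\,\widetilde T_x\otimes T_\psi^*$ and $R^0\psi_*(T_\psi^*\otimes\phi^*TM_\xi)\,=\,{\mathcal O}_{\mathcal Y}$ from \cite{Bi} and codimension-three estimates. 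Your outline is an honest plan, but as written it defers the essential cohomological input rather than supplying it, so it does not constitute a proof.
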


\begin{proof}
The homomorphism $\gamma$ is injective by Proposition \ref{prop1}. We will
prove that
\begin{equation}\label{g1}
\dim H^0(M_\xi,\, {\rm End}({\mathcal E}_x)\otimes TM_\xi)\,=\, 1
\end{equation}
which would prove that $\gamma$ is surjective.

Let
$$
{\mathbb P}\, :=\, {\mathbb P}({\mathcal E}_x)\, \stackrel{\phi}{\longrightarrow}
\, M_\xi
$$
be the projective bundle of relative dimension $r-1$ that parametrizes all the
hyperplanes in the fibers of the vector bundle ${\mathcal E}_x$. Let
$$
T_\phi\, \longrightarrow\, {\mathbb P}
$$
be the relative tangent bundle for the projection $\phi$, so $T_\phi$ is the
kernel of the differential $d\phi\, :\, T{\mathbb P}\, \longrightarrow\, \phi^*TM_\xi$
of $\phi$.

\begin{lemma}\label{lem2}
There is a natural isomorphism
$$
H^0(M_\xi,\, {\rm End}({\mathcal E}_x)\otimes TM_\xi)\, \stackrel{\sim}{\longrightarrow}
\, H^0({\mathbb P},\, T_\phi\otimes \phi^*TM_\xi)\, .
$$
\end{lemma}

\begin{proof}
We have $\phi_* T_\phi\,=\, \text{ad}({\mathcal E}_x)$, so by the projection
formula,
$$
\phi_* (T_\phi\otimes \phi^*TM_\xi)\,=\, \text{ad}({\mathcal E}_x)\otimes TM_\xi\, .
$$
This implies that $H^0({\mathbb P},\, T_\phi\otimes \phi^*TM_\xi)\,=\,
H^0(M_\xi,\, {\rm ad}({\mathcal E}_x)\otimes TM_\xi)$. But
$$
H^0(M_\xi,\, {\rm End}({\mathcal E}_x)\otimes TM_\xi)\,=\,
H^0(M_\xi,\, {\rm ad}({\mathcal E}_x)\otimes TM_\xi)\oplus H^0(M_\xi,\,TM_\xi)\, ,
$$
and $H^0(M_\xi,\,TM_\xi)\,=\, 0$ \cite[p.~391, Theorem~1(a)]{NR}, \cite[p.~110,
Theorem~6.2]{H2}. Therefore, the lemma follows.
\end{proof}

Let ${\mathcal N}$ denote the moduli space of semistable vector bundles $E$ on $X$
of rank $r$ and $\bigwedge^r E\, =\, \xi\otimes {\mathcal O}_X(-x)$.

For any point $(E,\, H) \, \in\, {\mathbb P}$, we have a vector bundle $V$ on $X$
that fits in the short exact sequence
\begin{equation}\label{g3}
0\, \longrightarrow\, V\, \longrightarrow\,E\, \longrightarrow\, E_x/H
\, \longrightarrow\, 0\, .
\end{equation}
Note that $\bigwedge^r V\, =\, \xi\otimes {\mathcal O}_X(-x)$, however $V$ is not
semistable in general. Nevertheless, there is a nonempty Zariski open subset
$$
{\mathcal U}\, \subset\, \mathbb P
$$
satisfying the following four conditions:
\begin{enumerate}
\item the codimension of the complement ${\mathcal U}^c\, \subset\, \mathbb P$ is at
least three,

\item for every $(E,\, H) \, \in\, {\mathcal U}$, the corresponding vector bundle
$V$ constructed in \eqref{g3} is semistable,

\item for the resulting map
\begin{equation}\label{g4}
\psi\, :\, {\mathcal U}\, \longrightarrow\, {\mathcal N}\, ,
\end{equation}
the image ${\mathcal Y}\, :=\, \psi({\mathcal U})$ is Zariski open in ${\mathcal N}$ with
the codimension of the complement being ${\mathcal Y}^c\, \subset\, {\mathcal N}$
at least three, and

\item the map $\psi$ is a projective fibration over ${\mathcal Y}$.
\end{enumerate}
(See \cite{NR}.)

Consider the differential $d\psi\, :\, T {\mathcal U}\, \longrightarrow\, 
\psi^*{\mathcal N}$ of $\psi$ in \eqref{g4}. The kernel $$T_\psi\, :=\, 
\text{kernel}(d\psi) \, \subset\, T{\mathcal U}$$ is the relative tangent bundle on 
${\mathcal U}$.

Let $\widetilde{T}_x\,:=\, {\mathcal U}\times T_xX\, \longrightarrow\, \mathcal U$ be
the trivial line bundle on ${\mathcal U}$ with fiber $T_xX$. On $\mathcal U$, we have
\begin{equation}\label{s1}
T_\phi\,=\, \widetilde{T}_x\otimes T^*_\psi
\end{equation}
(see \cite[p.~265, (2.7)]{Bi}). Next we have
$$
R^0\psi_*(T^*_\psi\otimes \phi^*TM_\xi)\,=\, {\mathcal O}_{\mathcal Y}
$$
(see \cite[p.~266, Lemma~3.1]{Bi}). So,
$$
H^0({\mathcal U},\, T^*_\psi\otimes \phi^*TM_\xi)\,=\, \mathbb C\, .
$$
Now combining Lemma \ref{lem2} and \eqref{s1} it follows that
$$
H^0(M_\xi,\, {\rm End}({\mathcal E}_x)\otimes TM_\xi)\, =
\, T_xX\, .
$$
This proves \eqref{g1}.
\end{proof}

\begin{remark}\label{rem1}

The assumption that $g\, \geq\, 3$ was used in the proofs of Proposition \ref{prop1}
and Theorem \ref{thm1}; more precisely, this assumption is used in the proofs of
Lemma \ref{lem1} and Lemma \ref{lem2}. Well-known is the fact that a holomorphic
vector bundle on ${\mathbb C}{\mathbb P}^1$ decomposes into a direct sum of
holomorphic line bundles \cite{Gr}; hence, there are no strictly stable vector bundles on
${\mathbb C}{\mathbb P}^1$ of rank larger than $1$. From Atiyah's classification of vector bundles
on an elliptic curve $Y$ \cite{At}, we know that the moduli space $M_\xi$ for
$Y$ is a single point. What remains is the case of $g\,=\,2$. When
$r\,=\,g\,=\,2$, the moduli space $M_\xi$ has an explicit description
\cite{NR1}. However it is not clear whether Proposition \ref{prop1}
and Theorem \ref{thm1} hold in this special case.
\end{remark}

\section*{Acknowledgements}

We thank the referee for helpful comments. The first author acknowledges support of a J. 
C. Bose Fellowship. The second author acknowledges the support of a New Faculty 
Recruitment Grant from the University of Saskatchewan.


\end{document}